\newtheorem{thm}{Theorem}[section]
\newtheorem{lem}[thm]{Lemma}
\newtheorem{prop}[thm]{Proposition}
\theoremstyle{definition}
\newtheorem{rem}[thm]{Remark}
\numberwithin{equation}{section} 
\numberwithin{figure}{section}
\numberwithin{table}{section}
\begin{document}

\title{Rademacher Type and Enflo Type Coincide}

\author{Paata Ivanisvili}
\address{(P.I.) 
Department of Mathematics, North Carolina State University, Raleigh, NC
27695, USA; and Department of Mathematics, University of California, 
Irvine, CA 92617, USA}
\email{pivanisv@uci.edu}

\author{Ramon van Handel}
\address{(R.v.H.) Fine Hall 207, Princeton University, Princeton, NJ 
08544, USA}
\email{rvan@princeton.edu}

\author{Alexander Volberg}
\address{(A.V.) Department of Mathematics, Michigan State University, 
East Lansing, MI 48823, USA}
\email{volberg@math.msu.edu}

\begin{abstract}
A nonlinear analogue of the Rademacher type of a Banach space
was introduced in classical work of Enflo. The key feature of Enflo type 
is that its definition uses only the metric structure of the Banach space, 
while the definition of Rademacher type relies on its linear structure.
We prove that Rademacher type and Enflo type coincide, settling a 
long-standing open problem in Banach space theory. The proof is based on
a novel dimension-free analogue of Pisier's inequality on the discrete cube.
\end{abstract}

\subjclass[2010]{46B09; 46B07; 60E15}

\keywords{Rademacher type; Enflo type;
Pisier's inequality; Banach spaces}

\maketitle

\thispagestyle{empty}

\section{Introduction and main results}
\label{sec:intro}

Let $(X,\|\cdot\|)$ be a Banach space. We say that $X$ has
\emph{Rademacher type} $p\in[1,2]$ if there exists
$C\in(0,\infty)$ so that for all $n\ge 1$ and 
$x_1,\ldots,x_n\in X$
$$
	\mathbf{E}\Bigg\|\sum_{j=1}^n\varepsilon_jx_j\Bigg\|^p
	\le
	C^p\sum_{j=1}^n \|x_j\|^p.
$$
We denote by $T_p^\mathrm{R}(X)$ the smallest possible constant $C$ in 
this inequality.

A nonlinear notion of type was introduced by Enflo 
\cite{Enf78}: a Banach space has \emph{Enflo 
type} $p$ if there exists $C\in(0,\infty)$ so that
for all $n\ge 1$ and $f:\{-1,1\}^n\to X$
$$
	\mathbf{E}\bigg\|\frac{f(\varepsilon)-f(-\varepsilon)}{2}\bigg\|^p 
	\le
	C^p \sum_{j=1}^n \mathbf{E}\|D_jf(\varepsilon)\|^p,
$$
and we denote by $T_p^\mathrm{E}(X)$ the smallest possible constant $C$ in   
this inequality. Here we define the discrete partial derivatives
on the cube $\{-1,1\}^n$ as
$$
	D_jf(\varepsilon) := 
	\frac{f(\varepsilon_1,\ldots,\varepsilon_j,\ldots,\varepsilon_n)
	-f(\varepsilon_1,\ldots,-\varepsilon_j,\ldots,\varepsilon_n)}{2}.
$$
The key feature of Enflo type is that its definition depends only on the 
metric structure of $X$, that is, it involves only distances 
between two points. This notion therefore extends 
naturally to the setting of general metric spaces. In contrast, the 
definition of Rademacher type relies on the linear structure of $X$.

The study of metric properties of Banach spaces, known as the ``Ribe 
program'', has been of central importance in Banach space theory in 
recent decades 
\cite{Nao12}. Understanding the relationship between Rademacher type and 
Enflo type is a fundamental question in this program. That Enflo type 
$p$ implies Rademacher type $p$ follows immediately by choosing the linear 
function $f(\varepsilon)=\sum_{j=1}^n \varepsilon_jx_j$ in the definition 
of Enflo type. Whether the converse is also true, that is, that Rademacher 
type $p$ implies Enflo type $p$, is a long-standing problem that 
dates back to Enflo's original paper \cite{Enf78} from 1978. Despite a 
number of partial results in this direction 
\cite{Enf69,Enf70,BMW86,Pis86,NS02,MN07,HN13,EN19}, 
the question has remained open.

Here we settle Enflo's question in the affirmative: Rademacher type $p$ is 
equivalent to Enflo type $p$. In other words, Enflo type provides a 
characterization of Rademacher type using only the metric structure of $X$.

\begin{thm}
\label{thm:enflo}
We have
$$
	T_p^\mathrm{R}(X) \le T_p^\mathrm{E}(X) \le
	\frac{\pi}{\sqrt{2}}T_p^{\mathrm{R}}(X)
$$
for every $p\in[1,2]$ and Banach space $X$.
\end{thm}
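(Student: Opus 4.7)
The easy inequality $T_p^{\mathrm{R}}(X)\le T_p^{\mathrm{E}}(X)$ is what we get by specializing the Enflo type inequality to the linear map $f(\varepsilon)=\sum_{j=1}^n \varepsilon_j x_j$: a direct computation yields $D_jf(\varepsilon)=\varepsilon_j x_j$ (so $\|D_jf(\varepsilon)\|=\|x_j\|$) and $(f(\varepsilon)-f(-\varepsilon))/2=\sum_{j=1}^n\varepsilon_j x_j$, and the Enflo type inequality reduces to the Rademacher type inequality with the same constant.

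The substantive direction $T_p^{\mathrm{E}}(X)\le \frac{\pi}{\sqrt{2}}\,T_p^{\mathrm{R}}(X)$ is the heart of the paper. My plan is to reduce it to a dimension-free analogue of Pisier's inequality on the cube, of the form
\[
\mathbf{E}\bigg\|\frac{f(\varepsilon)-f(-\varepsilon)}{2}\bigg\|^p \;\le\; \bigg(\frac{\pi}{\sqrt{2}}\bigg)^p\,\mathbf{E}_{\varepsilon,\delta}\bigg\|\sum_{j=1}^n\delta_jD_jf(\varepsilon)\bigg\|^p
\]
for every $f:\{-1,1\}^n\to X$, where $\delta_1,\ldots,\delta_n$ are Rademacher signs independent of $\varepsilon$ and the constant depends on neither $n$ nor $X$. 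Granting this, the theorem follows at once: conditioning on $\varepsilon$ and applying the Rademacher type $p$ inequality to the inner sum gives $\mathbf{E}_\delta\|\sum_j\delta_jD_jf(\varepsilon)\|^p\le T_p^{\mathrm{R}}(X)^p\sum_j\|D_jf(\varepsilon)\|^p$, and integrating over $\varepsilon$ yields Enflo type $p$ with constant $\frac{\pi}{\sqrt{2}}\,T_p^{\mathrm{R}}(X)$.

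To establish the displayed inequality I would set up a Gaussian interpolation in the spirit of Maurey's proof of Pisier's dimension-free inequality on Gauss space. Take $g,h$ i.i.d.\ standard Gaussian in $\mathbb{R}^n$ and set $\varepsilon:=\mathrm{sign}(g)$. The rotation $g(\theta):=g\cos\theta+h\sin\theta$, $\theta\in[0,\pi]$, carries $g$ to $-g$, so the Rademacher trajectory $\varepsilon(\theta):=\mathrm{sign}(g(\theta))$ satisfies $\varepsilon(0)=\varepsilon$ and $\varepsilon(\pi)=-\varepsilon$; each coordinate flips exactly once, at a time $\theta_j\in(0,\pi)$, with $\theta_1,\ldots,\theta_n$ i.i.d.\ uniform on $(0,\pi)$ and independent of $\varepsilon$. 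Summing the jumps of $\theta\mapsto f(\varepsilon(\theta))$ produces the pointwise identity
\[
f(\varepsilon)-f(-\varepsilon)\;=\;2\sum_{j=1}^n D_jf(\varepsilon(\theta_j^-)).
\]
The hardest step is then to massage this identity into the desired bound, which requires simultaneously (i) replacing the argument of each $D_j$ by $\varepsilon$ itself, and (ii) producing an independent Rademacher sign $\delta_j$ in front of each $D_jf(\varepsilon)$, at the cost of only a dimension-free multiplicative constant. A natural source for the signs is the symmetry $h_j\mapsto -h_j$, which reflects $\theta_j\mapsto\pi-\theta_j$ and so pairs each trajectory with a Rademacher-symmetric sibling; combined with the fact that $\varepsilon(\pi/2)=\mathrm{sign}(h)$ is an independent Rademacher copy of $\varepsilon$, this should enable a resymmetrization producing the desired $\delta_j$'s.

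The principal obstacle is precisely the dimension-free constant. Pisier's classical inequality (with $\|f-\mathbf{E}f\|^p$ on the left) carries a factor logarithmic in $n$ that cannot be entirely removed for general Banach spaces, so the argument has to exploit in an essential way the antisymmetric left-hand side $\|(f(\varepsilon)-f(-\varepsilon))/2\|^p$ that appears here. Pinning down the exact resymmetrization that yields both the signs $\delta_j$ and the sharp constant $\pi/\sqrt{2}$ will be the delicate part of the proof; heuristically, the factor $\pi$ is the length of the rotation interval from Maurey's Gaussian argument, while the factor $1/\sqrt{2}$ reflects the discrete-versus-Gaussian normalization along the interpolation.
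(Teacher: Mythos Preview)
The easy direction is fine. For the hard direction, however, there is a genuine gap: you have not proved your displayed dimension-free inequality, and the scheme you outline gives no indication of how to do so. The Gaussian-rotation/sign-trajectory construction you describe is essentially the classical route to Pisier's inequality on the cube; your telescoping identity $f(\varepsilon)-f(-\varepsilon)=2\sum_j D_jf(\varepsilon(\theta_j^-))$ is correct, but the step you defer as ``resymmetrization'' --- simultaneously replacing each argument $\varepsilon(\theta_j^-)$ by $\varepsilon$ and manufacturing independent signs $\delta_j$ --- is exactly where the $\log n$ loss enters in Pisier's original argument, and nothing in your sketch explains how to avoid it. Your diagnosis that the antisymmetric left-hand side is what rescues the constant is also off target: for odd $f$ one has $(f(\varepsilon)-f(-\varepsilon))/2=f(\varepsilon)=f(\varepsilon)-\mathbf{E}f(\varepsilon)$, so on odd functions your displayed inequality \emph{is} Pisier's inequality, and there is no evident reason the odd-function case of \eqref{eq:pisierc} should hold dimension-free in an arbitrary Banach space.

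The paper's argument is entirely different and never passes through an inequality with uniform Rademacher signs $\delta_j$ on the right. One first bounds $\mathbf{E}\|(f(\varepsilon)-f(-\varepsilon))/2\|^p\le\mathbf{E}\|f(\varepsilon)-\mathbf{E}f(\varepsilon)\|^p$ by convexity (so the antisymmetry is discarded immediately), and then applies the new dimension-free inequality of Theorem~\ref{thm:main}, obtained by differentiating the discrete heat kernel: its right-hand side carries not $\delta_j$ but the \emph{biased} standardized variables $\delta_j(t)=(\xi_j(t)-e^{-t})/\sqrt{1-e^{-2t}}$, averaged over $t$ against a probability measure $\mu$ on $(0,\infty)$. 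One then symmetrizes $\delta_j(t)$ to $\varepsilon_j'(\xi_j(t)-\xi_j'(t))/\sqrt{\mathrm{Var}\,\xi_j(t)}$, applies the Rademacher type $p$ hypothesis conditionally in the $\varepsilon'$ variables, and uses $p\le 2$ to bound the resulting $p$-th moment by the second moment; this produces the factor $2^{p/2}$ and hence the constant $\pi/\sqrt{2}$. In short, the dimension-free bound comes not from exploiting the antisymmetric left-hand side but from replacing the uniform signs by the heat-semigroup-induced biased variables on the right --- an idea absent from your proposal.
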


The key new ingredient in the proof of Theorem \ref{thm:enflo} is a novel 
dimension-free analogue of a classical inequality of Pisier.

\subsection{Pisier's inequality}

Let $p\ge 1$, let $f:\{-1,1\}^n\to X$ and let $\varepsilon,\delta$ be 
independent random vectors that are uniformly distributed on the discrete 
cube $\{-1,1\}^n$. As part of his investigation of metric type, Pisier 
discovered the following class of Sobolev-type inequalities for 
vector-valued functions on the discrete cube:
\begin{equation}
\label{eq:pisierc}
	\mathbf{E}\|f(\varepsilon)-\mathbf{E}f(\varepsilon)\|^p \le
	C^p\, \mathbf{E}\Bigg\|\sum_{j=1}^n \delta_j D_jf(\varepsilon)
	\Bigg\|^p.
\end{equation}
If such an inequality were to hold with a constant $C$ that is independent 
of dimension $n$, then Enflo's problem would be solved: if $X$ has 
Rademacher type $p$, then applying this property to the right-hand side of 
\eqref{eq:pisierc} conditionally on $\varepsilon$ would yield immediately 
the definition of Enflo type $p$. Unfortunately, Pisier was able to prove 
\eqref{eq:pisierc} only with a dimension-dependent constant $C\sim\log n$ 
\cite[Lemma 7.3]{Pis86}, and it was subsequently shown by Talagrand 
\cite[section 6]{Tal93} that this order of growth is optimal: that is, 
there exist Banach spaces $X$ for which the optimal constant in 
Pisier's inequality must grow logarithmically with dimension.

In order to resolve Enflo's problem, however, it is not necessary to 
establish Pisier's inequality for an \emph{arbitrary} Banach space: it 
suffices to show that \eqref{eq:pisierc} holds with a dimension-free 
constant under the additional assumption that $X$ has nontrivial type. For 
this reason, subsequent work has focused on identifying conditions on the 
Banach space $X$ under which \eqref{eq:pisierc} holds with a constant that 
depends only on the geometry of $X$ (but not on $n$). Notably, Naor and 
Schechtman \cite{NS02} proved that \eqref{eq:pisierc} holds with a 
dimension-free constant under the stronger assumption that $X$ is an UMD 
Banach space (see also \cite{HN13,Esk20}). Very recently,
Eskenazis and Naor \cite{EN19} proved that for superreflexive Banach
spaces $X$, the constant in Pisier's inequality can be improved to
$\log^\alpha n$ for some $\alpha<1$.

Beside the inequality \eqref{eq:pisierc}, Pisier also proved \cite[Theorem 
2.2]{Pis86} a more general counterpart of his inequality in Gauss space:
if $f:\mathbb{R}^n\to X$ is 
locally Lipschitz, $G,G'$ are independent standard Gaussian 
vectors in $\mathbb{R}^n$, and $\Phi:X\to\mathbb{R}$ is 
convex and satisfies a mild regularity assumption, then
\begin{equation}
\label{eq:pisierg}
	\mathbf{E}[\Phi(f(G)-\mathbf{E}f(G))] \le
	\mathbf{E}\bigg[\Phi\bigg(\frac{\pi}{2}\sum_{j=1}^n
	G_j' \frac{\partial f}{\partial x_j}(G)\bigg)\bigg].
\end{equation}
One obtains an inequality analogous to \eqref{eq:pisierc} by choosing 
$\Phi(x)=\|x\|^p$. Remarkably, the Gaussian inequality is 
dimension-free for an arbitrary Banach space $X$, in sharp contrast to the 
inequality on the cube. Unfortunately, its proof is very special to the 
Gaussian case: one defines $G(\theta):=G\sin\theta+G'\cos\theta$, and 
notes that $(G(\theta),\frac{d}{d\theta}G(\theta))$ has the same 
distribution as $(G,G')$ for each $\theta$ by rotation-invariance of the 
Gaussian measure. Then \eqref{eq:pisierg} follows by expressing 
$f(G)-f(G')=\int_0^{\pi/2} \frac{d}{d\theta}f(G(\theta))\,d\theta$ and 
applying Jensen's inequality. If one attempts to repeat this idea on the 
discrete cube, the absence of rotational symmetry makes the argument 
inherently inefficient, and one cannot do better than \eqref{eq:pisierc} 
with constant $C\sim\log n$.

Despite the apparent obstructions, we will prove in this paper a 
completely general dimension-free analogue of \eqref{eq:pisierg} on the 
discrete cube. The existence of such an inequality appears at first sight 
to be quite unexpected. It will turn out, however, that the 
dimension-dependence of \eqref{eq:pisierc} is not an intrinsic feature of 
the discrete cube, but is simply a reflection of the fact that 
\eqref{eq:pisierc} is not the ``correct'' analogue of the corresponding 
Gaussian inequality. To obtain a dimension-free inequality, we will 
replace $\delta$ by a vector of \emph{biased} Rademacher variables 
$\delta(t)$ which arises naturally in our proof by differentiating the 
discrete heat kernel.

\subsection{A dimension-free Pisier inequality}
\label{sec:mainres}

The following random variables will appear frequently in the sequel, so we 
fix them once and for all. Let $\varepsilon$ be a random vector that is 
uniformly distributed on the cube $\{-1,1\}^n$. Given $t>0$, we let 
$\xi(t)$ be a random vector in the cube, independent of $\varepsilon$, 
whose coordinates $\xi_i(t)$ are independent and identically distributed 
with
$$
	\mathbf{P}\{\xi_i(t)=1\}=\frac{1+e^{-t}}{2},\qquad
	\mathbf{P}\{\xi_i(t)=-1\}=\frac{1-e^{-t}}{2}.
$$
We also define the standardized vector $\delta(t)$ by
$$
	\delta_i(t) := \frac{\xi_i(t)-\mathbf{E}\xi_i(t)}{
	\sqrt{\mathop{\mathrm{Var}}\xi_i(t)}} =
	\frac{\xi_i(t) - e^{-t}}{\sqrt{1-e^{-2t}}}.
$$
The following analogue of \eqref{eq:pisierg} lies at the heart of this 
paper.

\begin{thm}
\label{thm:main}
For any linear space $X$, function
$f:\{-1,1\}^n\to X$, and convex function 
$\Phi:X\to\mathbb{R}$, we have
\begin{equation}
\label{eq:main}
	\mathbf{E}[\Phi(f(\varepsilon)-\mathbf{E}f(\varepsilon))] \le
	\int
	\mathbf{E}\bigg[\Phi\bigg(\frac{\pi}{2}\sum_{j=1}^n
	\delta_j(t) D_jf(\varepsilon)\bigg)\bigg]\,\mu(dt),
\end{equation}
where $\mu$ is the probability measure on $\mathbb{R}_+$ with density
$\mu(dt):=\frac{2}{\pi}\frac{1}{\sqrt{e^{2t}-1}}dt$.
\end{thm}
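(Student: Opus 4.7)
The plan is to use the semigroup fundamental theorem of calculus combined with a Stein-type identity that makes $\delta(t)$ appear with the right coefficient, followed by Jensen's inequality and a measure-preserving symmetry of the discrete cube. Throughout, let $P_tf(\varepsilon):=\mathbf{E}_\xi f(\xi(t)\varepsilon)$ be the Bonami--Beckner noise operator; since $P_0f=f$ and $P_\infty f=\mathbf{E}f$, FTC gives $f-\mathbf{E}f=-\int_0^\infty\tfrac{d}{dt}P_tf\,dt$.

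The crux is to rewrite $\tfrac{d}{dt}P_tf$ in a form that features $\sum_i\delta_i(t)D_if(\xi(t)\varepsilon)$. To this end I would combine two elementary ingredients: the pointwise factorization $D_if(y)=y_i\bar D_if(y_{-i})$, where $\bar D_if$ does not depend on the $i$th coordinate, and the one-variable identity $\mathbf{E}[\delta_i(t)\xi_i(t)]=\sqrt{1-e^{-2t}}$, which follows at once from the definitions. Integrating over $\xi_i$ first, these give the key Stein-type identity
$$\mathbf{E}_\xi\bigl[\delta_i(t)\,D_if(\xi(t)\varepsilon)\bigr]=\cot\theta\cdot P_tD_if(\varepsilon),\qquad \theta:=\arcsin e^{-t}.$$
Summing over $i$ and using $-\tfrac{d}{dt}P_tf=\sum_i P_tD_if$ produces $-\tfrac{d}{dt}P_tf(\varepsilon)=\tan\theta\sum_i\mathbf{E}_\xi[\delta_i(t)D_if(\xi(t)\varepsilon)]$. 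A short computation shows $\tan\theta\,dt=\tfrac{\pi}{2}\mu(dt)$ (which also explains the normalization of $\mu$), so integration over $t$ yields
$$f(\varepsilon)-\mathbf{E}f(\varepsilon)=\frac{\pi}{2}\int\sum_i\mathbf{E}_\xi\bigl[\delta_i(t)\,D_if(\xi(t)\varepsilon)\bigr]\,\mu(dt).$$

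Viewing the right-hand side as an expectation $\mathbf{E}_{T,\xi}[\cdot]$ with $T\sim\mu$ and $\xi=\xi(T)$, Jensen's inequality for the convex $\Phi$ immediately gives
$$\Phi\bigl(f(\varepsilon)-\mathbf{E}f(\varepsilon)\bigr)\le\int\mathbf{E}_\xi\Phi\bigl(\tfrac{\pi}{2}\sum_i\delta_i(t)D_if(\xi(t)\varepsilon)\bigr)\,\mu(dt).$$
The last step is to take $\mathbf{E}_\varepsilon$ and use that for each fixed $\xi\in\{-1,1\}^n$ the map $\varepsilon\mapsto\xi\varepsilon$ is a measure-preserving bijection of $\{-1,1\}^n$. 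After the substitution $\varepsilon'=\xi\varepsilon$ the pair $(\varepsilon',\xi)$ is independent with $\varepsilon'$ uniform and $\xi(t)$ retaining its biased law, so $\delta(t)$ (a deterministic function of $\xi(t)$) and $\varepsilon'$ are independent with the declared marginals; this rewrites the inner integrand as $\mathbf{E}_{\varepsilon,\delta}\Phi(\tfrac{\pi}{2}\sum_i\delta_i(t)D_if(\varepsilon))$, which is exactly \eqref{eq:main}.

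The main obstacle, and the source of the novelty, is finding the Stein-type identity in the second paragraph. A naive differentiation of $P_tf(\varepsilon)=\mathbf{E}_\xi f(\xi\varepsilon)$ via the one-variable Stein identity applied directly to $f$ would produce $\sum_i\delta_i f(\xi\varepsilon)$, which is essentially the scalar $\sum_i\delta_i$ times $f(\xi\varepsilon)$ and cannot reproduce the coordinate-by-coordinate structure of $\sum_i\delta_iD_if(\varepsilon)$. The essential point is to apply the Stein calculation to $D_if$ rather than to $f$: the product $\delta_i\xi_i$ then arises through the factorization $D_if=y_i\bar D_if$ and contributes the clean coefficient $\cos\theta$, and the cube bijection $\varepsilon\mapsto\xi\varepsilon$ transports the resulting expression back to $\varepsilon$ with no dimensional penalty---which is ultimately why the inequality is dimension-free.
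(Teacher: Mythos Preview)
Your argument is correct. It is essentially the ``alternative approach'' the paper records in the Remark following the proof: you establish the pointwise identity
\[
f(\varepsilon)-\mathbf{E}f(\varepsilon)=\frac{\pi}{2}\int \mathbf{E}_\xi\Bigg[\sum_{j}\delta_j(t)\,D_jf(\xi(t)\varepsilon)\Bigg]\mu(dt),
\]
then apply Jensen and the measure-preserving bijection $\varepsilon\mapsto\xi\varepsilon$. Your derivation of the key identity via the factorization $D_if(y)=y_i\bar D_if(y_{-i})$ together with $\mathbf{E}[\delta_i\xi_i]=\sqrt{1-e^{-2t}}$ is a pleasant variant of the paper's route, which instead computes $D_jP_t$ from the explicit heat kernel (Lemma~2.1) and then uses $D_j^2=D_j$ and $D_jP_t=P_tD_j$; the two computations produce the same formula $\mathbf{E}_\xi[\delta_i(t)D_if(\xi(t)\varepsilon)]=\sqrt{e^{2t}-1}\,P_tD_if(\varepsilon)$. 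The paper's \emph{primary} proof is organized differently: it dualizes via $\Phi(x)=\sup_{z}\{\langle z,x\rangle-\Phi^*(z)\}$, moves the semigroup onto the dual function $g$ through the quadratic form $-\langle g,\Delta P_tf\rangle=\sum_j\langle D_jP_tg,D_jf\rangle$, and only then invokes Lemma~2.1. As the paper notes, the duality proof uses nothing beyond the quadratic form and the gradient-of-heat-kernel formula, so it ports more cleanly to settings where identities like $D_j^2=D_j$ (equivalently your factorization) are unavailable; your direct approach, on the other hand, avoids convex conjugates entirely and makes the role of Jensen transparent.
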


Even though \eqref{eq:main} is formulated in terms of the biased variables 
$\delta_j(t)$ as opposed to the Rademacher variables $\delta_j$ that 
appear in \eqref{eq:pisierc}, the proof of Theorem \ref{thm:enflo} will 
follow readily by a routine symmetrization argument. For this purpose the 
precise distribution of the random variables $\delta_i(t)$ is in fact 
immaterial: it suffices that they are independent, centered, and have 
bounded variance. However, other applications (such as Theorem 
\ref{thm:cotype} below) do require more precise information on the 
distribution of $\delta_i(t)$, which can be read off from its definition.

\begin{rem}
It is interesting to note that \eqref{eq:main} is not just an analogue 
of \eqref{eq:pisierg} on the cube: it is in fact a strictly stronger 
result, as the Gaussian inequality can be derived 
from Theorem \ref{thm:main} by the central limit theorem. To see why, 
assume 
$f:\mathbb{R}^n\to X$ is a sufficiently smooth function with compact 
support and let $\Phi:X\to\mathbb{R}$ be a sufficiently regular convex 
function. Define $f_N:\{-1,1\}^{n\times N}\to X$ by
$$
	f_N(\varepsilon) := f\bigg(
	\frac{\sum_{j=1}^N\varepsilon_{1j}}{\sqrt{N}},\ldots,
	\frac{\sum_{j=1}^N\varepsilon_{nj}}{\sqrt{N}}
	\bigg),
$$
and note that for $1\le i\le n$ and $1\le j\le N$
$$
	D_{ij}f_N(\varepsilon) =
	\frac{\varepsilon_{ij}}{\sqrt{N}}
	\frac{\partial f}{\partial x_i}
	\bigg(
        \frac{\sum_{j=1}^N\varepsilon_{1j}}{\sqrt{N}},\ldots,
        \frac{\sum_{j=1}^N\varepsilon_{nj}}{\sqrt{N}}
        \bigg) + 
	o\bigg(\frac{1}{\sqrt{N}}\bigg)
$$
as $N\to\infty$. Thus by Theorem \ref{thm:main}
\begin{align*}
	&\mathbf{E}[\Phi(f_N(\varepsilon)-
	\mathbf{E} f_N(\varepsilon))]
	\\ &\le
        \int
        \mathbf{E}\bigg[\Phi\bigg(\frac{\pi}{2}
	\sum_{i=1}^n
	\frac{\sum_{j=1}^N\delta_{ij}(t) \varepsilon_{ij}}{\sqrt{N}}
        \frac{\partial f}{\partial x_i}
        \bigg(
        \frac{\sum_{j=1}^N\varepsilon_{1j}}{\sqrt{N}},\ldots,
        \frac{\sum_{j=1}^N\varepsilon_{nj}}{\sqrt{N}}
        \bigg)\bigg)
	\bigg]\,\mu(dt)
	+o(1).
\end{align*}
Letting $N\to\infty$ now yields \eqref{eq:pisierg} by the multivariate
central limit theorem, as $\{(\varepsilon_{1j},\delta_{1j}(t) 
\varepsilon_{1j},\ldots,\varepsilon_{nj},\delta_{nj}(t)
\varepsilon_{nj})\}_{j\le N}$ are i.i.d.\ random vectors with 
unit covariance matrix.
The requisite regularity assumptions on $f$ and $\Phi$ can subsequently be 
removed by routine approximation arguments.

The above discussion also shows that the constant in Theorem 
\ref{thm:main} is optimal. Indeed, as \eqref{eq:main} implies
\eqref{eq:pisierg}, it suffices
to show that \eqref{eq:pisierg} is sharp. But this is already known
to be the case when $X=\mathbb{R}$ and $\Phi(x)=|x|$
\cite[Chapter 8]{Led96}.
\end{rem}

When $\Phi(x)=\|x\|^p$, the conclusion of Theorem \ref{thm:main} may be 
slightly improved. As the improvement will be needed in the sequel, we 
spell out this variant separately.

\begin{thm}
\label{thm:mainlp}
Let $\mu$ be as in Theorem \ref{thm:main}. Then
for any Banach space $(X,\|\cdot\|)$, function
$f:\{-1,1\}^n\to X$, and $1\le p<\infty$, we have
\begin{equation}
\label{eq:mainlp}
	(\mathbf{E}
	\|f(\varepsilon)-\mathbf{E}f(\varepsilon)\|^p)^{1/p}
	\le
	\frac{\pi}{2}
	\int
	\bigg(\mathbf{E}
	\bigg\|\sum_{j=1}^n
	\delta_j(t) D_jf(\varepsilon)\bigg\|^p\bigg)^{1/p}
	\mu(dt).
\end{equation}
\end{thm}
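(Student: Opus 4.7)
The plan is to follow the proof of Theorem~\ref{thm:main}: I would first extract the almost-sure representation formula on which that theorem rests, and then obtain Theorem~\ref{thm:mainlp} from it by replacing the application of Jensen's inequality to the $\mu$-integral with Minkowski's integral inequality for the vector-valued $L^p$-norm.

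Concretely, I would introduce the heat semigroup $P_tf(\varepsilon):=\mathbf{E}_\xi[f(\varepsilon\xi(t))]$, whose generator with respect to the paper's convention for $D_j$ is $L=\sum_j D_j$, and start from the identity $f(\varepsilon)-\mathbf{E}f(\varepsilon)=\int_0^\infty LP_tf(\varepsilon)\,dt$. A direct Fourier computation on the characters $\chi_S(\varepsilon)=\prod_{i\in S}\varepsilon_i$ gives the algebraic identity
\begin{equation*}
\mathbf{E}_\xi\bigl[\delta_j(t)\,D_jf(\varepsilon\xi(t))\bigr]
= \sqrt{1-e^{-2t}}\; e^{t}\; D_jP_tf(\varepsilon),
\end{equation*}
whose scalar prefactor combines with $dt$ to produce exactly $\tfrac{\pi}{2}\,\mu(dt)$. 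Summing in $j$ and integrating in $t$ therefore yields the pointwise representation
\begin{equation*}
f(\varepsilon)-\mathbf{E}f(\varepsilon)
= \frac{\pi}{2}\int \mathbf{E}_\xi\!\left[\sum_{j=1}^n \delta_j(t)\,D_jf(\varepsilon\xi(t))\right]\mu(dt),
\end{equation*}
which is the linearization underlying Theorem~\ref{thm:main}.

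From this identity I would apply Minkowski's integral inequality in $L^p$ to move the $\mu$-integral outside the norm, and Jensen's inequality (for the convex function $\|\cdot\|^p$) to move the inner average $\mathbf{E}_\xi$ inside the norm; the two combined give
\begin{equation*}
(\mathbf{E}\|f(\varepsilon)-\mathbf{E}f(\varepsilon)\|^p)^{1/p}
\le \frac{\pi}{2}\int \Bigl(\mathbf{E}\Bigl\|\textstyle\sum_{j=1}^n\delta_j(t)\,D_jf(\varepsilon\xi(t))\Bigr\|^p\Bigr)^{1/p}\mu(dt).
\end{equation*}
Finally, I would observe that $(\varepsilon\xi(t),\delta(t))$ has the same joint distribution as $(\varepsilon,\delta(t))$ with $\varepsilon$ and $\delta(t)$ independent: for each $j$ the product $\varepsilon_j\xi_j(t)$ is uniform on $\{-1,1\}$ and independent of $\delta_j(t)=(\xi_j(t)-e^{-t})/\sqrt{1-e^{-2t}}$. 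This substitution identifies the inner $L^p$-norm with the right-hand side of~\eqref{eq:mainlp}.

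The main obstacle I anticipate is verifying the semigroup identity relating $\mathbf{E}_\xi[\delta_j(t)D_jf(\varepsilon\xi(t))]$ to $D_jP_tf(\varepsilon)$: both the prefactor $\sqrt{1-e^{-2t}}\,e^t$ and the very fact that averaging against $\delta_j(t)$ reconstructs the correct Fourier-diagonal operator hinge on a cancellation tied to the precise biased distribution of $\xi(t)$ and the normalization of $\delta(t)$. Once this representation is in hand, the gap between Theorems~\ref{thm:main} and~\ref{thm:mainlp} reduces to the choice of Jensen versus Minkowski on the $\mu$-integral: Jensen produces the bound valid for any convex $\Phi$, whereas Minkowski, exploiting the sublinearity of the $L^p$-norm, gives the strictly stronger~\eqref{eq:mainlp}.
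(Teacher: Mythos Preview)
Your proposal is correct, but it organizes the argument differently from the paper's own proof. The paper proves Theorem~\ref{thm:mainlp} by duality: it writes $(\mathbf{E}\|f(\varepsilon)-\mathbf{E}f(\varepsilon)\|^p)^{1/p}=\sup_{\|g\|_q\le 1}\mathbf{E}\langle g(\varepsilon),f(\varepsilon)-\mathbf{E}f(\varepsilon)\rangle$, applies the semigroup interpolation and Lemma~\ref{lem:probrep} to the bilinear form (exactly as in the proof of Theorem~\ref{thm:main}) to obtain the identity \eqref{eq:dualdual}, and then uses H\"older's inequality together with the fact that $\varepsilon\xi(t)\stackrel{d}{=}\varepsilon$. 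You instead work from the \emph{pointwise} representation \eqref{eq:pointwise} and apply Minkowski's integral inequality in the $\mu$-variable followed by Jensen in the $\xi$-variable; your final distributional step (that $\varepsilon\xi(t)$ is uniform and independent of $\xi(t)$, hence of $\delta(t)$) is correct and is the analogue of the paper's use of $\varepsilon\xi(t)\stackrel{d}{=}\varepsilon$. The paper explicitly records your route as an alternative in the Remark following the proofs, noting that \eqref{eq:pointwise} together with Jensen/Minkowski reproduces both theorems without duality. What each approach buys: your argument is more elementary and mirrors the proof of the Gaussian inequality \eqref{eq:pisierg}, but it relies on the cube-specific identities $D_j^2=D_j$ and $D_jP_t=P_tD_j$ to reach \eqref{eq:pointwise}; the paper's duality proof uses only the quadratic form of $\Delta$ and the gradient formula of Lemma~\ref{lem:probrep}, which makes it more portable to other settings. (One terminological quibble: the generator of $P_t$ is $\Delta=-\sum_jD_j$, not $L=\sum_jD_j$; your computation $f-\mathbf{E}f=\int_0^\infty\sum_jD_jP_tf\,dt$ is nonetheless correct.)
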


In this setting, the difference between \eqref{eq:main} and 
\eqref{eq:mainlp} is that in the former the exponent $1/p$ appears
outside the $\mu(dt)$ integral on the right-hand side.

We now briefly describe the idea behind the proofs of Theorems 
\ref{thm:main} and \ref{thm:mainlp}, which was inspired by Gaussian 
semigroup methods of Ledoux \cite[Chapter 8]{Led96}. Instead of using 
rotational invariance as in the proof of \eqref{eq:pisierg} to interpolate 
between $f(G)$ and $f(G')$, we use the heat semigroup on the discrete cube 
to interpolate between $f$ and $\mathbf{E}f$. The resulting expressions 
involve derivatives of the form $D_je^{t\Delta}f$. We now observe that 
rather than applying the derivative to $f$, we may differentiate the heat 
kernel instead. A short computation (Lemma \ref{lem:probrep}) shows that 
the gradient of the heat kernel on the cube yields the biased Rademacher 
vector $\delta(t)$. This elementary observation, analogous to the 
classical smoothing property of diffusion semigroups, leads us to discover 
\eqref{eq:main} in a completely natural manner.

\subsection{Pisier's inequality and cotype}

Theorems \ref{thm:main} and \ref{thm:mainlp} provide dimension-free 
analogues of Pisier's inequality on the cube for an \emph{arbitrary} 
Banach space $X$. With these results in hand, however, we can now revisit 
the question of what additional assumption must be imposed on $X$ in order 
that Pisier's original inequality \eqref{eq:pisierc} holds with a 
dimension-independent constant.

Recall that a Banach space $(X,\|\cdot\|)$ has (Rademacher)
\emph{cotype} $q\in[2,\infty)$ if there exists
$C\in(0,\infty)$ so that for all $n\ge 1$ and 
$x_1,\ldots,x_n\in X$
$$
	\sum_{j=1}^n \|x_j\|^q\le 
	C^q\,\mathbf{E}\Bigg\|\sum_{j=1}^n\varepsilon_jx_j\Bigg\|^q.
$$
We denote by $C_q(X)$ the smallest possible constant $C$ in this 
inequality. The significance of cotype in the present context is 
twofold:
\begin{enumerate}[$\bullet$]
\itemsep\abovedisplayskip
\item If $X$ has finite cotype, one can estimate biased 
Rademacher averages by regular Rademacher averages \cite[Proposition 
3.2]{Pis86}, so that Theorem \ref{thm:mainlp} yields \eqref{eq:pisierc}.
\item If $X$ does not have finite cotype, it contains $\ell_\infty^n$
uniformly \cite[Theorem 3.3]{Pis86}, which enables us to embed Talagrand's
example \cite[section 6]{Tal93} for every $n$.
\end{enumerate}
Both theorems applied here are classical results of Maurey and Pisier. 
These observations give rise to the following characterization.

\begin{thm}
\label{thm:cotype}
For any Banach space $X$ and $1\le p<\infty$, Pisier's 
inequality \eqref{eq:pisierc} holds with a constant independent of 
dimension $n$ if and only if $X$ has finite cotype.
\end{thm}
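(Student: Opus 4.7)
My plan is to prove the two implications separately. For the forward direction I combine Theorem \ref{thm:mainlp} with Pisier's comparison between biased and standard Rademacher averages \cite[Proposition 3.2]{Pis86}. For the reverse direction I use the Maurey--Pisier dichotomy \cite[Theorem 3.3]{Pis86} to uniformly embed $\ell_\infty^n$ into $X$ and then transport Talagrand's counterexample.

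For the ``if'' direction, fix $f:\{-1,1\}^n\to X$ and start from \eqref{eq:mainlp}. For each $t$ I condition on $\varepsilon$, so that the vectors $x_j := D_j f(\varepsilon)\in X$ are frozen and $\delta_1(t),\ldots,\delta_n(t)$ appear as independent mean-zero bounded random variables of unit variance. When $X$ has finite cotype $q$, Pisier's Proposition 3.2 of \cite{Pis86} yields
$$
\bigg(\mathbf{E}\bigg\|\sum_j \delta_j(t)\, x_j\bigg\|^p\bigg)^{1/p} \le K(t)\,\bigg(\mathbf{E}\bigg\|\sum_j \delta_j\, x_j\bigg\|^p\bigg)^{1/p},
$$
with $K(t)$ depending only on the $L^q$-moment of $\delta_j(t)$, on $p$, and on the cotype constants of $X$. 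Substituting into \eqref{eq:mainlp}, noting that the Rademacher $L^p$-norm on the right-hand side no longer depends on $t$, and integrating against $\mu(dt)$ then gives \eqref{eq:pisierc} with dimension-free constant $\frac{\pi}{2}\int K(t)\,\mu(dt)$.

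For the ``only if'' direction I argue by contrapositive. If $X$ has infinite cotype, then by \cite[Theorem 3.3]{Pis86} it contains $\ell_\infty^n$ uniformly: there is a fixed $\lambda>1$ and linear embeddings $T_n:\ell_\infty^n\to X$ with $\|T_n\|\|T_n^{-1}\|\le\lambda$ for all $n$. Talagrand's construction \cite[section 6]{Tal93} produces functions $g_n:\{-1,1\}^n\to\ell_\infty^n$ witnessing that the optimal constant in \eqref{eq:pisierc} on $\ell_\infty^n$ grows like $\log n$. Setting $f_n := T_n\circ g_n$, both sides of \eqref{eq:pisierc} for $f_n$ differ from those for $g_n$ only by multiplicative factors bounded by powers of $\lambda$, so the optimal constant in \eqref{eq:pisierc} for $X$ must grow at least like $\log n/\lambda$, and cannot be dimension-free.

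The principal technical point is the integrability of $K(t)$ against $\mu(dt)$ in the first step. As $t\to 0^+$, the $L^\infty$ bound on $\delta_j(t)$ blows up like $t^{-1/2}$, so higher moments of $\delta_j(t)$ may diverge, and this singularity must be controlled against the $t^{-1/2}$ singularity of the density of $\mu$ at $0$. Exploiting unit variance and interpolating between $L^2$ and $L^\infty$ should yield a polynomial singularity in $K(t)$ that remains integrable against $\mu$.
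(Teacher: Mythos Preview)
Your proposal is correct and follows essentially the same route as the paper: Theorem~\ref{thm:mainlp} combined with the Maurey--Pisier contraction principle for the ``if'' direction, and the Maurey--Pisier $\ell_\infty$-embedding plus Talagrand's example for the ``only if'' direction. The paper carries out the integrability check you flag explicitly: after symmetrizing $\delta_j(t)$ it applies the tail-integral form of the contraction principle to obtain $K(t)\lesssim (1-e^{-2t})^{1/\max(q,p)-1/2}$, and then computes $\int K(t)\,\mu(dt)\le \max(q,p)$, which is exactly the polynomial singularity $t^{1/\max(q,p)-1/2}$ your interpolation heuristic predicts. One small slip: Talagrand's functions take values in $\ell_\infty^{2^n}$, not $\ell_\infty^n$, so you should embed $\ell_\infty^{2^n}$ rather than $\ell_\infty^n$; this is harmless since the Maurey--Pisier theorem gives uniform copies of $\ell_\infty^N$ for all $N$.
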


As any Banach space with nontrivial type has finite cotype \cite[Theorem 
7.1.14]{HNVW17}, we obtain in particular an affirmative answer to the 
question posed after \eqref{eq:pisierc}: Pisier's inequality holds with a 
dimension-free constant in any Banach space with nontrivial type. However, 
one may argue that this fact is no longer of great importance in view of 
our main results; in practice Theorems \ref{thm:main} and \ref{thm:mainlp} 
may be just as easily deployed directly in applications (as we do in 
Theorem \ref{thm:enflo}), and give rise to much better constants than 
would be obtained from Theorem \ref{thm:cotype}.

A quantitative formulation of Theorem \ref{thm:cotype} will be given in 
section \ref{sec:cotype}.

\subsection{Organization of this paper}

The rest of this paper is organized as follows. Section 
\ref{sec:main} is devoted to the proofs of Theorems \ref{thm:main} and 
\ref{thm:mainlp}. We subsequently deduce Theorem \ref{thm:enflo} in 
section \ref{sec:enfloproof}. Finally, Theorem \ref{thm:cotype} is proved 
in section \ref{sec:cotype}.

\section{Proof of Theorems \ref{thm:main} and \ref{thm:mainlp}}
\label{sec:main}

The Laplacian on the discrete cube is defined by
$$
	\Delta f := -\sum_{j=1}^n D_jf.
$$
We denote by $P_t$ the standard heat semigroup on the cube, that is,
$$
	P_t := e^{t\Delta}.
$$
Recall that
$\Delta$ is self-adjoint on $L^2(\{-1,1\}^n)$ with quadratic form
$$
	-\mathbf{E}[f(\varepsilon)\,\Delta g(\varepsilon)] =
	\sum_{j=1}^n \mathbf{E}[D_jf(\varepsilon)\, D_jg(\varepsilon)].
$$
The basis for the proof of Theorem \ref{thm:main} is the following 
probabilistic representation of the heat semigroup and its discrete
partial derivatives.

\begin{lem}
\label{lem:probrep}
We have
$$
	P_tf(x) = \mathbf{E}[f(x_1\xi_1(t),\ldots,x_n\xi_n(t))]
	\quad  \mbox{for }t\ge 0,
$$
and 
$$
	D_jP_t f(x) =
	\frac{1}{\sqrt{e^{2t}-1}}\,
	\mathbf{E}[\delta_j(t)f(x_1\xi_1(t),\ldots,x_n\xi_n(t))]
	\quad \mbox{for } t>0.
$$
\end{lem}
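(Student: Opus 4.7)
I would prove both identities by direct computation, exploiting the fact that the Walsh characters $w_S(x) := \prod_{j \in S} x_j$ simultaneously diagonalize $\Delta$ and factor cleanly under the expectation on the right-hand side.

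For the semigroup representation, I would check the claim on the basis $\{w_S\}$. Since $D_j w_S = \mathbf{1}_{\{j \in S\}} w_S$, we have $\Delta w_S = -|S|\, w_S$, so $P_t w_S = e^{-|S|t} w_S$. On the other hand, by independence of the $\xi_i(t)$ and $\mathbf{E}\xi_j(t) = e^{-t}$,
$$
\mathbf{E}[w_S(x_1 \xi_1(t), \ldots, x_n \xi_n(t))] = \Bigl(\prod_{j \in S} x_j\Bigr) \prod_{j \in S} \mathbf{E}\xi_j(t) = e^{-|S|t}\, w_S(x).
$$
Both sides of the claimed formula are linear in $f$ and agree on the Walsh basis, so they agree for all $f$; the extension to $X$-valued $f$ is immediate from the expansion $f = \sum_S a_S w_S$ with $a_S \in X$.

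For the gradient identity, I would apply $D_j$ to the representation just proved and condition on $\xi_i(t)$ for $i \ne j$. The $j$-th coordinate contributes
$$
\tfrac{1}{2}\bigl(\mathbf{P}\{\xi_j(t) = 1\} - \mathbf{P}\{\xi_j(t) = -1\}\bigr)\bigl(f(\ldots, x_j, \ldots) - f(\ldots, -x_j, \ldots)\bigr) = e^{-t}\, D_j f(\ldots, x_j, \ldots),
$$
so that $D_j P_t f(x) = e^{-t}\, \mathbf{E}_{i \ne j}[D_j f(\ldots, x_j, \ldots)]$, with the remaining arguments still $x_i \xi_i(t)$. A parallel computation of $\mathbf{E}[(\xi_j(t) - e^{-t})\, f(x_1 \xi_1(t), \ldots, x_n \xi_n(t))]$ produces the same inner quantity but with coefficient $1 - e^{-2t}$ in place of $e^{-t}$, since the biased weights telescope via $(1+e^{-t})(1-e^{-t}) = 1 - e^{-2t}$. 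Dividing by $\sqrt{1 - e^{-2t}}$ converts $\xi_j(t) - e^{-t}$ into $\delta_j(t)$, and equating the two expressions yields exactly the factor $\sqrt{1 - e^{-2t}}\cdot e^t = \sqrt{e^{2t} - 1}$ in the statement.

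The only minor obstacle is bookkeeping of signs and biased weights in the one-dimensional step; there is no substantive difficulty. Conceptually, the identity records the fact that $\xi_j(t)$ is tuned precisely so as to realize the eigenvalue $e^{-t}$ of $P_t$ on each Walsh character, which is exactly why $\delta_j(t)$ is the natural object arising from differentiation of the discrete heat kernel.
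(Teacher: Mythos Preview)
Your proof is correct. The paper takes a somewhat different route: rather than checking the semigroup formula on the Walsh basis, it verifies that $Q_t f(x) := \mathbf{E}[f(x\xi(t))]$ satisfies the Kolmogorov equation $\frac{d}{dt}Q_t = \Delta Q_t$ with $Q_0 = \mathrm{Id}$, whence $Q_t = P_t$ by uniqueness. For the gradient identity, the paper works directly with the explicit kernel $\prod_i \frac{1+e^{-t}\xi_i}{2}$, observing that flipping $x_j$ multiplies the $j$-th factor by $\frac{1-e^{-t}\xi_j}{1+e^{-t}\xi_j}$ and then simplifying $\tfrac{1}{2}\bigl(1 - \tfrac{1-e^{-t}\xi_j}{1+e^{-t}\xi_j}\bigr) = \tfrac{e^{-t}}{1-e^{-2t}}(\xi_j - e^{-t})$. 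Your conditioning argument and the paper's kernel manipulation are really the same one-dimensional computation dressed differently; the more substantive distinction is spectral (Walsh) versus dynamical (Kolmogorov) for the first identity. Both are standard and of comparable difficulty, though your Walsh approach makes the role of the eigenvalue $e^{-t}$---and hence the choice of bias in $\xi_j(t)$---slightly more transparent, as you note in your closing remark.
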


\begin{proof}
Let $Q_tf(x) := \mathbf{E}[f(x_1\xi_1(t),\ldots,x_n\xi_n(t))]$.
By the definition of $\xi_j(t)$, we have
$$
	Q_tf(x) =
	\sum_{\xi\in\{-1,1\}^n}
	\Bigg[
	\prod_{i=1}^n
	\frac{1+e^{-t}\xi_i}{2}\Bigg]
	f(x_1\xi_1,\ldots,x_n\xi_n).
$$
Note also that 
$$
	Q_tf(x_1,\ldots,-x_j,\ldots,x_n) =
	\sum_{\xi\in\{-1,1\}^n}
	\Bigg[
	\prod_{i=1}^n
        \frac{1+e^{-t}\xi_i}{2}
	\Bigg]
	\frac{1-e^{-t}\xi_j}{1+e^{-t}\xi_j}
	f(x_1\xi_1,\ldots,x_n\xi_n).
$$
We now observe that
$$
	\frac{1}{2}\bigg(1-\frac{1-e^{-t}\xi_j}{1+e^{-t}\xi_j}
	\bigg) =
	\frac{e^{-t}\xi_j}{1+e^{-t}\xi_j} =
	\frac{e^{-t}}{1-e^{-2t}}
	(\xi_j-e^{-t}).
$$
We have therefore shown that
$$
	D_jQ_t f(x) =
	\frac{1}{\sqrt{e^{2t}-1}}\,
	\mathbf{E}[\delta_j(t)f(x_1\xi_1(t),\ldots,x_n\xi_n(t))].
$$
It remains to show that $Q_tf=P_tf$. To this end, note that
$Q_0f=f$ and
\begin{align*}
	\frac{d}{dt}Q_tf(x) &=
	-\sum_{j=1}^n
	\sum_{\xi\in\{-1,1\}^n}
	\Bigg[
	\prod_{i=1}^n
	\frac{1+e^{-t}\xi_i}{2}\Bigg]
	\frac{e^{-t}\xi_j}{1+e^{-t}\xi_j}
	f(x_1\xi_1,\ldots,x_n\xi_n)
	\\
	&= -\sum_{j=1}^n D_jQ_tf(x) = \Delta Q_tf(x).
\end{align*}
Thus $Q_t$ satisfies the Kolmogorov equation for the
semigroup $P_t$.
\end{proof}

We are now ready to prove Theorem \ref{thm:main}.

\begin{proof}[Proof of Theorem \ref{thm:main}]
We may assume without loss of generality that $X$ is finite-dimensional
(as $f(\{-1,1\}^n)$ spans a space of dimension at most $2^n$).
Write
$$
	\Phi(x) = \sup_{z\in X^*}
	\{\langle z,x\rangle - \Phi^*(z)\}
$$
where $\Phi^*:X^*\to (-\infty,\infty]$ is the convex conjugate of 
$\Phi$. Then
$$
	\mathbf{E}[\Phi(f(\varepsilon)-\mathbf{E}f(\varepsilon))] =
	\sup_{g:\{-1,1\}^n\to X^*}\{
	\mathbf{E}[\langle g(\varepsilon),f(\varepsilon)-
	\mathbf{E}f(\varepsilon)\rangle] -
	\mathbf{E}[\Phi^*(g(\varepsilon))]\}.
$$
As 
$P_0f=f$ and $\lim_{t\to\infty}P_tf=\mathbf{E}f(\varepsilon)$
(this follows, e.g., from Lemma \ref{lem:probrep}), we can write by the
fundamental theorem of calculus 
\begin{align*}
	\mathbf{E}[\langle g(\varepsilon),f(\varepsilon)-
        \mathbf{E}f(\varepsilon)\rangle]
	&=
	-\int_0^\infty
	\mathbf{E}\bigg[
	\bigg\langle g(\varepsilon),\frac{d}{dt}P_tf(\varepsilon)
	\bigg\rangle\bigg]\,dt 
	\\ &=
	-\int_0^\infty
	\mathbf{E}[\langle g(\varepsilon),\Delta P_tf(\varepsilon)
	\rangle]\,dt \\
	&=
	\int_0^\infty
	\sum_{j=1}^n
	\mathbf{E}[\langle D_j P_t g(\varepsilon),D_j f(\varepsilon)
	\rangle]\,dt,
\end{align*}
where we used in the last line that $\Delta$ is self-adjoint and commutes
with $P_t$. To proceed, we note that by Lemma \ref{lem:probrep}
$$
	\sum_{j=1}^n
	\mathbf{E}[\langle D_j P_t g(\varepsilon),D_j f(\varepsilon)
        \rangle] =
	\frac{1}{\sqrt{e^{2t}-1}}\,
	\mathbf{E}\bigg[
	\bigg\langle g(\varepsilon\xi(t)),
	\sum_{j=1}^n\delta_j(t)D_j f(\varepsilon)
        \bigg\rangle\bigg],
$$
where $\varepsilon\xi(t):=(\varepsilon_1\xi_1(t),\ldots,
\varepsilon_n\xi_n(t))$.
Moreover, $\mathbf{E}[\Phi^*(g(\varepsilon))]=
\mathbf{E}[\Phi^*(g(\varepsilon\xi(t)))]$, as the random vectors 
$\varepsilon\xi(t)$ and $\varepsilon$ have the
same distribution. Thus
\begin{align*}
	&\mathbf{E}[\langle g(\varepsilon),f(\varepsilon)-
        \mathbf{E}f(\varepsilon)\rangle]-
	\mathbf{E}[\Phi^*(g(\varepsilon))] \\
	&=
	\int
        \mathbf{E}\bigg[
        \bigg\langle g(\varepsilon\xi(t)),
	\frac{\pi}{2}
        \sum_{j=1}^n\delta_j(t)D_j f(\varepsilon)
        \bigg\rangle-
	\Phi^*(g(\varepsilon\xi(t)))
	\bigg]
	\,\mu(dt) \\
	&\le
	\int
        \mathbf{E}\bigg[
	\Phi\bigg(
	\frac{\pi}{2}
        \sum_{j=1}^n\delta_j(t)D_j f(\varepsilon)
        \bigg)
	\bigg]
	\,\mu(dt),
\end{align*}
and the conclusion follows.
\end{proof}

The proof of Theorem \ref{thm:mainlp} is almost identical.

\begin{proof}[Proof of Theorem \ref{thm:mainlp}]
In this case we use \cite[Proposition 1.3.1]{HNVW16}
$$
	(\mathbf{E}\|f(\varepsilon)-\mathbf{E}f(\varepsilon)\|^p)^{1/p}
	=
	\sup_{\mathbf{E}\|g(\varepsilon)\|^q\le 1}
	\mathbf{E}[\langle g(\varepsilon),
	f(\varepsilon)-\mathbf{E}f(\varepsilon)\rangle]
$$
with $\frac{1}{p}+\frac{1}{q}=1$. Proceeding exactly as in the proof of
Theorem \ref{thm:main}, we obtain
\begin{align}
\label{eq:dualdual}
	\mathbf{E}[\langle g(\varepsilon),f(\varepsilon)-
        \mathbf{E}f(\varepsilon)\rangle] 
	&=
	\int
        \mathbf{E}\bigg[
        \bigg\langle g(\varepsilon\xi(t)),
	\frac{\pi}{2}
        \sum_{j=1}^n\delta_j(t)D_j f(\varepsilon)
        \bigg\rangle
	\bigg]
	\,\mu(dt) \\
	&\nonumber\le
	\int
 	(\mathbf{E}\|g(\varepsilon\xi(t))\|^q)^{1/q}
        \bigg(\mathbf{E}
        \bigg\|\frac{\pi}{2}\sum_{j=1}^n
        \delta_j(t) D_jf(\varepsilon)\bigg\|^p\bigg)^{1/p}
	\,\mu(dt)
\end{align}
using H\"older's inequality.
Recalling that $\mathbf{E}\|g(\varepsilon\xi(t))\|^q=
\mathbf{E}\|g(\varepsilon)\|^q$ as the random vectors $\varepsilon\xi(t)$ 
and $\varepsilon$ have the same distribution, the conclusion 
follows readily.
\end{proof}

\begin{rem}[Alternative approach to the proofs of Theorems \ref{thm:main} 
and \ref{thm:mainlp}]
Using that $\varepsilon$ and $\varepsilon\xi(t)$
have the same distribution and that \eqref{eq:dualdual} holds for 
all $g$, it is readily seen that \eqref{eq:dualdual} implies the 
\emph{pointwise} identity
\begin{equation}
\label{eq:pointwise}
	f(x)-\mathbf{E}f(\varepsilon) =
	\int 
	\mathbf{E}\Bigg[\frac{\pi}{2} \sum_{j=1}^n 
	\delta_j(t)D_jf(x\xi(t))\Bigg]
	\,\mu(dt)
\end{equation}
for $x\in\{-1,1\}^n$. By using this identity one can organize the proofs
in a manner that is closer to the proof of \eqref{eq:pisierg}. For 
example, to prove Theorem \ref{thm:main} we can upper 
bound $\Phi(f(x)-\mathbf{E}f(\varepsilon))$ pointwise by applying Jensen's
inequality to the right-hand side of \eqref{eq:pointwise}, and then
\eqref{eq:main} follows by taking the expectation of the resulting
expression and using that $\varepsilon$ and $\varepsilon\xi(t)$ have the
same distribution.

The pointwise identity \eqref{eq:pointwise} can also be proved directly, 
which leads to proofs of Theorems \ref{thm:main} and \ref{thm:mainlp} that 
avoid the use of duality. The following argument was 
communicated to us by Jingbo Liu. First, note two basic properties of the 
discrete cube: $D_j^2=D_j$ and $D_j P_t=P_t D_j$ for every $j$. Thus we 
can write
\begin{align*}
	f(x)-\mathbf{E}f(\varepsilon) &=
	-\int_0^\infty \frac{d}{dt}P_tf(x)\,dt
	=
	-\int_0^\infty \Delta P_tf(x)\,dt
	\\ &=
	\int_0^\infty \sum_{j=1}^n D_j^2 P_tf(x)\,dt
	=
	\int_0^\infty \sum_{j=1}^n D_j P_t D_jf(x)\,dt.
	\\
	&=
	\int_0^\infty
	\frac{1}{\sqrt{e^{2t}-1}}\,
	\mathbf{E}\Bigg[\sum_{j=1}^n 
	\delta_j(t)D_jf(x\xi(t))\Bigg]
	\,dt,
\end{align*}
using Lemma \ref{lem:probrep} in the last step.
While conceptually appealing,
the disadvantage of this argument is that it relies on 
special properties of calculus on the discrete cube. In contrast, the 
proofs that are based on duality use nothing else than the quadratic form 
$-\langle f,\Delta g\rangle_{\ell^2(\{-1,1\})}=\langle 
Df,Df\rangle_{\ell^2(\{-1,1\})}$ and the gradient formula of Lemma 
\ref{lem:probrep}, providing a more direct route to extensions beyond 
the discrete cube.
\end{rem}

\section{Proof of Theorem \ref{thm:enflo}}
\label{sec:enfloproof}

Theorem \ref{thm:enflo} follows from Theorem 
\ref{thm:main} by a routine symmetrization argument.

\begin{proof}[Proof of Theorem \ref{thm:enflo}]
The first inequality $T_p^\mathrm{R}(X) \le T_p^\mathrm{E}(X)$ 
follows 
readily by choosing $f(\varepsilon)=\sum_{j=1}^n \varepsilon_jx_j$ in the 
definition of Enflo type.

In the converse direction, note first that as $\varepsilon$ and 
$-\varepsilon$ have the same distribution, and as $x\mapsto\|x\|^p$ is 
convex, we can estimate
$$
	\mathbf{E}\bigg\|\frac{f(\varepsilon)-f(-\varepsilon)}{2}\bigg\|^p
	=
	\mathbf{E}\bigg\|\frac{f(\varepsilon)-\mathbf{E}f(\varepsilon)
	-f(-\varepsilon)+\mathbf{E}f(-\varepsilon)}{2}\bigg\|^p
	\le
	\mathbf{E}\|f(\varepsilon)-\mathbf{E}f(\varepsilon)\|^p.
$$
Applying Theorem \ref{thm:main} 
with $\Phi(x)=\|x\|^p$ yields
$$
	\mathbf{E}\bigg\|\frac{f(\varepsilon)-f(-\varepsilon)}{2}\bigg\|^p
	\le
	\int
	\mathbf{E}\Bigg\|\frac{\pi}{2}\sum_{j=1}^n
	\delta_j(t) D_jf(\varepsilon)\Bigg\|^p\mu(dt).
$$
To estimate the right-hand side we use a standard symmetrization argument.
Let $\xi'(t)$ be an independent copy of $\xi(t)$ and 
$\varepsilon'$ be an independent copy of $\varepsilon$. Then
\begin{align*}
	\mathbf{E}\Bigg\|\sum_{j=1}^n
	\delta_j(t) D_jf(\varepsilon)\Bigg\|^p
	&\le
	\mathbf{E}\Bigg\|\sum_{j=1}^n
	\frac{\xi_j(t)-\xi_j'(t)}{\sqrt{\mathop\mathrm{Var}\xi_j(t)}}
	D_jf(\varepsilon)\Bigg\|^p
	\\
	&=
	\mathbf{E}\Bigg\|\sum_{j=1}^n
	\varepsilon_j'
	\frac{\xi_j(t)-\xi_j'(t)}{\sqrt{\mathop\mathrm{Var}\xi_i(t)}}
	D_jf(\varepsilon)\Bigg\|^p
	\\
	&\le
	T_p^{\mathrm{R}}(X)^p
	\sum_{j=1}^n
	\mathbf{E}
	\bigg|\frac{\xi_j(t)-\xi_j'(t)}{\sqrt{\mathop\mathrm{Var}\xi_i(t)}}
	\bigg|^p
	\mathbf{E}\|D_jf(\varepsilon)\|^p,
\end{align*}
where we used Jensen's inequality in the first line; that 
$\xi(t)-\xi'(t)$ has the same distribution as 
$\varepsilon'(\xi(t)-\xi'(t))$ (by symmetry and independence) in the 
second line; and the definition of Rademacher type 
conditionally on $\xi(t),\xi'(t),\varepsilon$
and that $\xi(t),\xi'(t),\varepsilon,\varepsilon'$ are independent
in the third line. But as $p\le 2$, we obtain
$$
        \mathbf{E}
        \bigg|\frac{\xi_j(t)-\xi_j'(t)}{\sqrt{\mathop\mathrm{Var}\xi_i(t)}}
        \bigg|^p
	\le
        \bigg(
        \frac{\mathbf{E}[(\xi_j(t)-\xi_j'(t))^2]}{\mathop\mathrm{Var}\xi_i(t)}
        \bigg)^{p/2}
	=
	2^{p/2}
$$
by Jensen's inequality. Thus we have shown
$$
        \mathbf{E}\bigg\|\frac{f(\varepsilon)-f(-\varepsilon)}{2}\bigg\|^p
	\le
	\bigg(
	\frac{\pi}{\sqrt{2}}
	T_p^{\mathrm{R}}(X)
	\bigg)^p
        \sum_{j=1}^n
        \mathbf{E}\|D_jf(\varepsilon)\|^p,
$$
which implies $T_p^{\mathrm{E}}(X) \le \frac{\pi}{\sqrt{2}}
T_p^{\mathrm{R}}(X)$.
\end{proof}

\section{Proof of Theorem \ref{thm:cotype}}
\label{sec:cotype}

The following contraction principle is a classical result of Maurey and 
Pisier (see, e.g., \cite[Proposition 3.2]{Pis86}). We spell out a version 
with explicit constants.

\begin{thm}
\label{thm:mp}
Let $(X,\|\cdot\|)$ be a Banach space of cotype $q<\infty$,
let $\eta_1,\ldots,\eta_n$ be i.i.d.\ symmetric random variables,
and let $\varepsilon$ be uniformly distributed on $\{-1,1\}^n$. Then
for any $n\ge 1$, $x_1,\ldots,x_n\in X$, and $1\le p<\infty$, we have
$$
	\Bigg(
	\mathbf{E}\Bigg\|\sum_{j=1}^n \eta_j x_j\Bigg\|^p\Bigg)^{1/p}
	\le
	L_{q,p}
	\int_0^\infty \mathbf{P}\{|\eta_1|>t\}^{\frac{1}{\max(q,p)}}dt\,
	\Bigg(
	\mathbf{E}\Bigg\|\sum_{j=1}^n \varepsilon_j x_j\Bigg\|^p
	\Bigg)^{1/p}
$$
with $L_{q,p}=L\,C_q(X)\max(1,(q/p)^{1/2})$,
where $L$ is a universal constant.
\end{thm}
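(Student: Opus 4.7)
My plan is to follow the classical Maurey--Pisier argument: use a layer-cake decomposition to reduce the $\eta$-sum to an integral of Bernoulli-thinned Rademacher sums, then exploit cotype together with Kahane--Khintchine to extract the sharp dependence on $C_q(X)$ and $\max(1,\sqrt{q/p})$. Since the $\eta_j$ are i.i.d.\ symmetric, $(\eta_1,\ldots,\eta_n)\stackrel{d}{=}(\varepsilon_1|\eta_1|,\ldots,\varepsilon_n|\eta_n|)$ with $(|\eta_j|)$ independent of the given Rademacher vector $\varepsilon$. Writing $|\eta_j|=\int_0^\infty\mathbf{1}_{\{|\eta_j|>t\}}\,dt$ and applying Minkowski's integral inequality in $L^p$ gives
\[
\Bigl(\mathbf{E}\Bigl\|\sum_j\eta_jx_j\Bigr\|^p\Bigr)^{1/p}
\le\int_0^\infty\Bigl(\mathbf{E}\Bigl\|\sum_j\varepsilon_j\pi_j(t)x_j\Bigr\|^p\Bigr)^{1/p}dt,
\]
where $\pi_j(t):=\mathbf{1}_{\{|\eta_j|>t\}}$ are i.i.d.\ Bernoulli of parameter $\alpha_t:=\mathbf{P}\{|\eta_1|>t\}$. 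So it suffices to establish the \emph{thinning estimate}: for $\pi_j$ i.i.d.\ Bernoulli$(\alpha)$,
\[
\Bigl(\mathbf{E}\Bigl\|\sum_j\varepsilon_j\pi_jx_j\Bigr\|^p\Bigr)^{1/p}
\le L\,C_q(X)\,\max(1,\sqrt{q/p})\,\alpha^{1/\max(q,p)}\Bigl(\mathbf{E}\Bigl\|\sum_j\varepsilon_jx_j\Bigr\|^p\Bigr)^{1/p},
\]
since integrating over $t$ with $\alpha=\alpha_t$ then recovers the theorem.

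To prove the thinning estimate, set $r:=\max(q,p)\ge q$. On the left, Jensen's inequality gives $\|\cdot\|_{L^p}\le\|\cdot\|_{L^r}$ for the thinned sum; on the right, Kahane's inequality for Rademacher sums gives $\|\sum_j\varepsilon_jx_j\|_{L^r}\le K\sqrt{r/p}\,\|\sum_j\varepsilon_jx_j\|_{L^p}$ with $K$ universal and $\sqrt{r/p}=\max(1,\sqrt{q/p})$. This reduces the problem to the $L^r$-bound
\[
\Bigl\|\sum_j\varepsilon_j\pi_jx_j\Bigr\|_{L^r}
\le L'\,C_q(X)\,\alpha^{1/r}\,\Bigl\|\sum_j\varepsilon_jx_j\Bigr\|_{L^r}.\quad(\star)
\]

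The hard part will be proving $(\star)$. The contraction principle alone gives only $\|R_\pi\|_{L^r}\le\|R\|_{L^r}$ with no $\alpha$-dependence, so the factor $\alpha^{1/r}$ must be extracted from cotype, which however provides only a \emph{lower} bound on Rademacher averages. The standard route combines a Hoffmann--J\o{}rgensen-type bound for sums of independent symmetric $X$-valued random variables with cotype: the ``maximum'' term $(\mathbf{E}\max_j\pi_j\|x_j\|^r)^{1/r}$ in Hoffmann--J\o{}rgensen is bounded by $(\alpha\sum_j\|x_j\|^r)^{1/r}$, and combining cotype $\sum_j\|x_j\|^q\le C_q(X)^q\|R\|_{L^q}^q$ with the elementary bound $\max_j\|x_j\|\le\mathbf{E}\|R\|\le\|R\|_{L^r}$ (from the identity $x_j=\mathbf{E}[\varepsilon_jR]$ and Jensen) gives $\sum_j\|x_j\|^r\le C_q(X)^q\|R\|_{L^r}^r$ for $r\ge q$, yielding the desired factor $C_q(X)^{q/r}\alpha^{1/r}\|R\|_{L^r}$ in that term. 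Controlling the complementary ``median/expectation'' term in Hoffmann--J\o{}rgensen requires a bootstrap, and careful bookkeeping through the cotype, Hoffmann--J\o{}rgensen, and Kahane--Khintchine estimates produces the explicit $L_{q,p}=L\cdot C_q(X)\cdot\max(1,\sqrt{q/p})$.
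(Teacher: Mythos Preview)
Your reduction via layer cake and Minkowski to the Bernoulli-thinning estimate is correct, as is your Jensen/Kahane--Khintchine sandwich reducing matters to $(\star)$ at exponent $r=\max(q,p)$. However, the paper takes a much shorter route: it treats the result as classical, cites \cite[Theorem~7.2.6]{HNVW17} directly for the case $p=q$, handles $p>q$ by the monotonicity $C_p(X)\le C_q(X)$ of cotype constants (so one simply reapplies the cited theorem with $q$ replaced by $p$), and handles $p<q$ by exactly your Jensen/Kahane--Khintchine sandwich. In particular the paper never proves $(\star)$ and never uses Hoffmann--J\o{}rgensen.

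Your sketch of $(\star)$ is where the proposal is genuinely incomplete. You correctly bound the maximal term in Hoffmann--J\o{}rgensen by $C_q(X)^{q/r}\alpha^{1/r}\|R\|_{L^r}$, but then defer the median/expectation term to an unspecified ``bootstrap.'' This is the crux: contraction gives only $\|R_\pi\|_{L^1}\le\|R\|_{L^1}$ with no $\alpha$-decay, so it is unclear what you would iterate, and you have not shown how a single power of $C_q(X)$ (rather than $C_q(X)^{q/r}$) emerges. The standard argument behind the cited reference avoids Hoffmann--J\o{}rgensen altogether: for $\alpha\approx 1/k$, assign each index $j$ independently to one of $k$ blocks via $\sigma_j\in\{1,\dots,k\}$ uniform, apply cotype to the independent symmetric block-sums $Y_l=\sum_{j:\sigma_j=l}\varepsilon_jx_j$ to get $\sum_{l=1}^k\mathbf{E}\|Y_l\|^q\le C_q(X)^q\,\mathbf{E}\|R\|^q$, and use symmetry in $l$ to conclude $k\,\mathbf{E}\|R_\pi\|^q\le C_q(X)^q\,\mathbf{E}\|R\|^q$, which is exactly $(\star)$ at $r=q$ with constant $C_q(X)$. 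If you want a self-contained proof rather than a citation, that block-partition idea is what is missing.
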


\begin{proof}
As $\eta_i$ are symmetric random variables, they have the same 
distribution as $\varepsilon_i\eta_i$. The conclusion for the special case
$p=q$ follows from \cite[Theorem 7.2.6]{HNVW17}. For the general case, we 
consider two distinct cases.

For the case $p>q$, recall that a Banach space with cotype $q$ also has 
cotype $r$ for all $r>q$, with $C_r(X)\le C_q(X)$ \cite[p.\,55]{HNVW17}.
Thus the conclusion follows readily from \cite[Theorem 7.2.6]{HNVW17} by 
choosing $q=p$.

For the case $p<q$, we bound the $L^p$-norm on the left-hand side by the 
$L^q$-norm, and then apply the inequality for the case $p=q$. This yields
$$
	\Bigg(
	\mathbf{E}\Bigg\|\sum_{j=1}^n \eta_j x_j\Bigg\|^p\Bigg)^{1/p}
	\le
	L\,C_q(X)
	\int_0^\infty \mathbf{P}\{|\eta_1|>t\}^{\frac{1}{q}}dt\,
	\Bigg(
	\mathbf{E}\Bigg\|\sum_{j=1}^n \varepsilon_j x_j\Bigg\|^q
	\Bigg)^{1/q}.
$$
We conclude by using the Kahane-Khintchine inequality \cite[Theorem 
6.2.4]{HNVW17} to bound the 
$L^q$-norm on the right-hand side by the $L^p$-norm, which incurs
the additional factor $\lesssim (q/p)^{1/2}$. This completes the proof.
\end{proof}

We are now ready to prove one direction of Theorem \ref{thm:cotype}: if 
$X$ has finite cotype, then \eqref{eq:pisierc} holds with a dimension-free 
constant.

\begin{prop}
\label{prop:cotype}
Let $(X,\|\cdot\|)$ be a Banach space of cotype $q$, and let
$\varepsilon,\delta$ be independent uniformly distributed random vectors
in $\{-1,1\}^n$. Then for any
function $f:\{-1,1\}^n\to X$ and $1\le p<\infty$, we have
$$
        \mathbf{E}\|f(\varepsilon)-\mathbf{E}f(\varepsilon)\|^p
        \le
        K_{q,p}^p\,
        \mathbf{E}\Bigg\|\sum_{j=1}^n \delta_j D_jf(\varepsilon)
        \Bigg\|^p
$$
with $K_{q,p}=K\, C_q(X)\, p \max(1,(q/p)^{3/2})$,
where $K$ is a universal constant.
\end{prop}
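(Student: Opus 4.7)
The plan is to start from Theorem~\ref{thm:mainlp}, which already reduces the task to controlling the biased averages $\sum_j\delta_j(t)D_jf(\varepsilon)$ by ordinary Rademacher averages $\sum_j\delta_j D_jf(\varepsilon)$ at the cost of a $t$-dependent multiplicative constant, and then integrating against $\mu(dt)$.

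First I symmetrize. Let $\delta'(t)$ be an independent copy of $\delta(t)$; since $\mathbf{E}\delta_j'(t)=0$, Jensen's inequality gives
$$
\mathbf{E}\bigg\|\sum_j \delta_j(t)D_jf(\varepsilon)\bigg\|^p \le \mathbf{E}\bigg\|\sum_j (\delta_j(t)-\delta_j'(t))D_jf(\varepsilon)\bigg\|^p.
$$
The variables $\eta_j(t):=\delta_j(t)-\delta_j'(t)$ are i.i.d.\ symmetric, so applying Theorem~\ref{thm:mp} conditionally on $\varepsilon$ with $x_j=D_jf(\varepsilon)$ and then taking $\mathbf{E}_\varepsilon$ via Fubini yields
$$
\bigg(\mathbf{E}\bigg\|\sum_j \delta_j(t)D_jf(\varepsilon)\bigg\|^p\bigg)^{1/p} \le L_{q,p}\,I(t)\,\bigg(\mathbf{E}\bigg\|\sum_j \delta_j D_jf(\varepsilon)\bigg\|^p\bigg)^{1/p},
$$
where $I(t):=\int_0^\infty \mathbf{P}\{|\eta_1(t)|>s\}^{1/r}\,ds$ and $r:=\max(q,p)$. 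Substituting into \eqref{eq:mainlp}, the proposition reduces to showing $\int I(t)\,\mu(dt)\lesssim r$.

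Next comes an explicit computation. Direct inspection of the two-point distribution of $\delta_1(t)$ shows that $\eta_1(t)$ is supported on the three points $\{\pm M_t,0\}$ with $M_t=2/\sqrt{1-e^{-2t}}$ and $\mathbf{P}(|\eta_1(t)|>0)=(1-e^{-2t})/2$. Hence
$$
I(t) = M_t\bigl((1-e^{-2t})/2\bigr)^{1/r} = 2^{1-1/r}(1-e^{-2t})^{1/r-1/2}.
$$
The substitution $u=e^{-t}$ turns $\int I(t)\,\mu(dt)$ into a Beta-function integral
$$
\frac{2^{2-1/r}}{\pi}\int_0^1(1-u^2)^{1/r-1}\,du = \frac{2^{1-1/r}}{\pi}\,B\!\bigl(\tfrac{1}{r},\tfrac{1}{2}\bigr) = \frac{2^{1-1/r}}{\sqrt{\pi}}\cdot\frac{\Gamma(1/r)}{\Gamma(1/r+1/2)},
$$
which is of order $r$ uniformly in $r\ge 1$, since $\Gamma(1/r)=\Theta(r)$ while $\Gamma(1/r+1/2)$ is pinched between two positive universal constants on $[1/2,3/2]$.

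Combining the prefactor $\pi/2$ from Theorem~\ref{thm:mainlp}, the Maurey--Pisier constant $L_{q,p}=L\,C_q(X)\max(1,(q/p)^{1/2})$, and the bound $\int I(t)\,\mu(dt)\lesssim \max(q,p)$ yields $K_{q,p}\lesssim C_q(X)\max(1,(q/p)^{1/2})\cdot\max(q,p)$. A short case analysis recovers the stated form: for $p\ge q$ this is $\lesssim C_q(X)\,p$, while for $p<q$ it is $\lesssim C_q(X)\,q\cdot(q/p)^{1/2}=C_q(X)\,p\,(q/p)^{3/2}$; both cases are subsumed by $\lesssim C_q(X)\,p\max(1,(q/p)^{3/2})$. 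The only substantive obstacle is the integrability of $I(t)$ near $t=0$, where $M_t\sim 1/\sqrt{t}$ diverges; the point is that the small-mass factor $(1-e^{-2t})^{1/r}$ pulled in from Theorem~\ref{thm:mp} precisely tames the $1/\sqrt{t}$ singularity of the $\mu$-density, and the resulting integral collapses to a standard Beta function of the correct order in $r$.
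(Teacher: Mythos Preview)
Your proof is correct and follows essentially the same route as the paper: symmetrize the biased variables, apply Theorem~\ref{thm:mp} conditionally on $\varepsilon$ to pass from $\eta_j(t)$ to Rademacher $\delta_j$, and then integrate the resulting factor $(1-e^{-2t})^{1/r-1/2}$ against $\mu$ before invoking Theorem~\ref{thm:mainlp}. The only cosmetic difference is that you evaluate $\int I(t)\,\mu(dt)$ exactly as a Beta function, whereas the paper uses the cruder bound $(1-e^{-2t})^{1/r-1}\le(1-e^{-t})^{1/r-1}$ to obtain $\int_0^\infty e^{-t}(1-e^{-t})^{1/r-1}\,dt=r$ directly.
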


\begin{proof}
Let $\xi'(t)$ be an independent copy of $\xi(t)$.
We first note that
$$
	\int_0^\infty \mathbf{P}\{|\xi_j(t)-\xi_j'(t)|>s\}^{1/r}ds
	=
	2^{1-1/r} (1-e^{-2t})^{1/r}.
$$
Thus
\begin{align*}
        \bigg(\mathbf{E}
        \bigg\|\sum_{j=1}^n
        \delta_j(t) D_jf(\varepsilon)\bigg\|^p\bigg)^{1/p} 
	&\le
	\frac{1}{\sqrt{1-e^{-2t}}}
        \bigg(\mathbf{E}
        \bigg\|\sum_{j=1}^n
        (\xi_j(t)-\xi_j'(t)) D_jf(\varepsilon)\bigg\|^p\bigg)^{1/p}
	\\
	&\le
	2L_{q,p} 
	(1-e^{-2t})^{\frac{1}{\max(q,p)}-\frac{1}{2}}
        \bigg(\mathbf{E}
        \bigg\|\sum_{j=1}^n
        \delta_j D_jf(\varepsilon)\bigg\|^p\bigg)^{1/p},
\end{align*}
where we used Jensen's inequality in the first line and we applied
Theorem \ref{thm:mp} conditionally on $\varepsilon$ in the second line.
Now note that 
\begin{align*}
	\frac{\pi}{2}
	\int 
	(1-e^{-2t})^{\frac{1}{\max(q,p)}-\frac{1}{2}}
	\mu(dt) 
	\le
	\int_0^\infty
	e^{-t}
	(1-e^{-t})^{\frac{1}{\max(q,p)}-1}\,dt
	= \max(q,p).
\end{align*}
Thus Theorem \ref{thm:mainlp} yields
$$
        (\mathbf{E}\|f(\varepsilon)-\mathbf{E}f(\varepsilon)\|^p)^{1/p}
	\le
	2L_{q,p}\max(q,p)
	\Bigg(\mathbf{E}\Bigg\|\sum_{j=1}^n \delta_j D_jf(\varepsilon)
        \Bigg\|^p\Bigg)^{1/p},
$$
and the conclusion follows by the definition of $L_{q,p}$ in
Theorem \ref{thm:mp}.
\end{proof}

\begin{rem}
It should be noted that the improvement provided by Theorem 
\ref{thm:mainlp} is used crucially in the proof of Theorem 
\ref{thm:cotype}. Had we used Theorem \ref{thm:main} instead, we would 
have encountered the integral
$\int_0^\infty 
(1-e^{-2t})^{\frac{p}{\max(q,p)}-\frac{p}{2}}\mu(dt)$ in the proof; 
it is readily verified that this integral
diverges at some finite value of $p$.
\end{rem}

It remains to show the converse direction: if $X$ does not have finite 
cotype, then the constant in \eqref{eq:pisierc} is at least of order $\log 
n$.

\begin{prop}
\label{prop:cotypeno}
If the Banach space $X$ does not have finite cotype, then for every $n\ge 
1$ and $1\le p<\infty$, there exists a function $f:\{-1,1\}^n\to X$ so 
that
$$
	\mathbf{E}\|f(\varepsilon)-\mathbf{E}f(\varepsilon)\|^p \ge
	C_{n,p}^p \,
	\mathbf{E}\Bigg\|\sum_{j=1}^n \delta_j D_jf(\varepsilon)
        \Bigg\|^p
$$
with $C_{n,p} = C\log(n/9p)$, where $C$ is a universal constant.
\end{prop}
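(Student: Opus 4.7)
The plan is to combine the two results that the authors have already flagged in the bulleted list preceding Theorem~\ref{thm:cotype}: Talagrand's explicit counterexample to Pisier's inequality in $\ell_\infty$ from \cite[section 6]{Tal93}, and the Maurey--Pisier theorem \cite[Theorem 3.3]{Pis86} that a Banach space without finite cotype contains $\ell_\infty^N$ uniformly for every $N$. The strategy is to run Talagrand's counterexample inside a $(1+o(1))$-embedded copy of $\ell_\infty^N$ in $X$, so that Talagrand's $\log n$ lower bound transfers to $X$ with essentially no loss.

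Concretely, first I would invoke Maurey--Pisier to obtain, for any prescribed $\lambda>1$ and any $N$, a linear map $T:\ell_\infty^N\to X$ with $\|T\|\,\|T^{-1}\|\le\lambda$. Then I would apply Talagrand's construction, which for each $n$ produces an explicit function $g_n:\{-1,1\}^n\to\ell_\infty^{N_n}$ (for some dimension $N_n$) witnessing the $\log n$ lower bound in Pisier's inequality \eqref{eq:pisierc}. Setting $f=T\circ g_n:\{-1,1\}^n\to X$, linearity of $T$ ensures that it commutes with $\mathbf{E}$ and with every $D_j$, so the two sides of \eqref{eq:pisierc} for $f$ differ from those for $g_n$ by multiplicative factors controlled by $\|T\|$ and $\|T^{-1}\|$. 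Taking $\lambda$ sufficiently close to $1$ then transfers the lower bound from $\ell_\infty^{N_n}$ into $X$, losing only a harmless factor (say $2$) in the constant.

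The main obstacle is quantifying Talagrand's lower bound in $L^p$ with the precise threshold $n\ge 9p$ encoded in $\log(n/9p)$. Talagrand's argument is most naturally written in an $L^2$ setting; to obtain the $L^p$ statement with the stated dependence one should either re-run his construction tracking all constants through the argument, or use Kahane--Khintchine to pass between $L^2$ and $L^p$ Rademacher averages at the cost of a factor $\lesssim\sqrt{p}$. The specific cutoff $n\ge 9p$ is exactly the regime in which this $\sqrt{p}$ loss can be absorbed without destroying the $\log n$ gain; for $n<9p$ the right-hand side $C\log(n/9p)$ is nonpositive and the statement is vacuous.
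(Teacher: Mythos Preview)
Your proposal is correct and follows exactly the paper's approach: cite Talagrand's counterexample in $\ell_\infty^{N}$ and transfer it into $X$ via the Maurey--Pisier theorem. The only difference is that the paper invokes Talagrand's result directly in its $L^p$ form with constant $c\log(n/9p)$ (this is already present in \cite[section 6]{Tal93}), so your detour through Kahane--Khintchine is unnecessary; and the paper is content with $2$-isomorphic copies of $\ell_\infty^N$ rather than $(1+o(1))$-isomorphic ones, since a fixed multiplicative loss is absorbed into the universal constant $C$.
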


\begin{proof}
It was shown by Talagrand \cite[section 6]{Tal93} that for every $n\ge 1$
and $1\le p<\infty$, there is a function 
$f:\{-1,1\}^n\to\ell_\infty^{2^n}$ so that
$$
	(\mathbf{E}\|f(\varepsilon)-\mathbf{E}f(\varepsilon)\|_\infty^p)^{1/p} 
	\ge
	c\log(n/9p)
	\Bigg(
	\mathbf{E}\Bigg\|\sum_{j=1}^n \delta_j D_jf(\varepsilon)
        \Bigg\|_\infty^p
	\Bigg)^{1/p}
$$
for a universal constant $c$. But if $X$ does not have finite cotype, 
then by the Maurey-Pisier theorem \cite[Theorem 7.3.8]{HNVW17} it must
contain a $2$-isomorphic copy of $\ell_\infty^N$ for every $N\ge 1$. Thus
we can embed Talagrand's example in $X$ for every $n\ge 1$ 
and $1\le p<\infty$, and the proof is readily concluded.
\end{proof}

\begin{rem}
We emphasize that our characterization of when Pisier's inequality holds
with dimension-free constant assumes the Banach space $X$ and
$1\le p<\infty$ are fixed. When this is not the case, other phenomena can 
arise. For example, it follows from a result of Wagner \cite{Wag00} that 
if one chooses $p\asymp n$, then \eqref{eq:pisierc} holds with a universal 
constant for \emph{any} Banach space $X$. This is is a purely 
combinatorial fact that does not capture any structure of the underlying 
space.
\end{rem}

\subsection*{Acknowledgments}

The authors thank Sergey Bobkov, Alexandros Eskenazis, Dong Li, Jingbo 
Liu, and Gilles Pisier for helpful discussions and comments. We are 
particularly grateful to Assaf Naor and to the anonymous referee for 
suggestions that considerably improved the presentation of the paper.

P.I.\ was supported in part by NSF grants DMS-1856486 and 
CAREER-DMS-1945102. R.v.H.\ was supported in part by NSF grants 
CAREER-DMS-1148711 and DMS-1811735, by the ARO through PECASE award 
W911NF-14-1-0094, and by the Simons Collaboration on Algorithms \& 
Geometry. A.V.\ was supported in part by NSF grants DMS-160065 and 
DMS-1900268.


\end{document}